\def\dbar{\bar\partial}
\def\C{{\mathbb C}}
\def\P{{\mathbb P}}
\newcommand{\Com}[1]{}
\def\be{\begin{equation}}
\def\ee{\end{equation}}
\newtheorem{thm}{Theorem}[section]
\newtheorem{lma}[thm]{Lemma}
\newtheorem{prop}[thm]{Proposition}
\theoremstyle{definition}
\theoremstyle{remark}
\newtheorem{preremark}{Remark}
\newtheorem{preex}{Example}
\newenvironment{remark}{\begin{preremark}}{\end{preremark}}
\numberwithin{equation}{section}
\begin{document}

\title[]{An extension theorem of Ohsawa-Takegoshi type for sections of a vector bundle}


\author{Hossein Raufi}

\address{H. Raufi\\Department of Mathematics\\Chalmers University of Technology and the University of Gothenburg\\412 96 G\"OTEBORG\\SWEDEN}

\email{raufi@chalmers.se}




\begin{abstract}
Using $L^2$-methods for the $\dbar$-equation we prove that the Ohsawa-Takegoshi extension theorem also holds for holomorphic sections of a vector bundle, over compact K\"ahler manifolds. We then proceed to show that the conditions that are needed are more liberal than the ones one would need if one instead reduced the extension problem to line bundles through the usual algebraic geometric procedure of studying the projective bundle associated with the vector bundle.
\end{abstract}

\maketitle


\section{Introduction}
\noindent Let $X$ be a compact K\"ahler manifold and let $S$ be a smooth hypersurface in $X$. $S$ then defines a line bundle over $X$, which we will denote by $(S)$, which has a global holomorphic section $s$ such that $S=s^{-1}(0)$. Also let $L$ be a complex line bundle over all of $X$. The extension theorem of Ohsawa and Takegoshi, which first appeared in \cite{OT}, is a very useful theorem which has many different variants. One of the most basic forms of the theorem, the so called adjunction version, states the following. Assume that the line bundles $L$ and $(S)$ have smooth metrics $\phi$ and $\psi$ respectively, satisfying the curvature assumptions
$$i\partial\dbar\phi\geq0$$
and
$$i\partial\dbar\phi\geq\delta i\partial\dbar\psi$$
for some $\delta>0$. Assume furthermore that $s$ is normalized so that
$$|s|^2e^{-\psi}\leq e^{-1/\delta}.$$
Finally let $u$ be a global holomorphic section of $K_S+L|_S$.

Then there exists a global holomorphic section $U$ of $K_X+(S)+L$ such that
$$U=ds\wedge u$$
on $S$ and such that $U$ satsfies the estimate
$$\int_Xc_nU\wedge\bar{U}e^{-\phi-\psi}\leq C\int_Sc_{n-1}u\wedge\bar{u}e^{-\phi}$$
for some constant $C$, where we use the shorthand notation $c_p:=i^{p^2}$.

Just as in H\"ormander's $L^2$ methods approach to solving the $\dbar-$equation, much of the usefulness of the extension theorem comes from the fact that it not only gives conditions under which the extension is possible, but also provides us with an estimate for the extension. This estimate has the added merit that the constant $C$ is completely universal.

The main aim of this paper is to extend this theorem to vector bundles, i.e. to show that it is possible to replace the complex line bundle $(L,\phi)$ with the holomorphic vector bundle $(E,h)$ where $h$ is a smooth hermitian metric on $E$. The first result of this paper is the following theorem:

\begin{thm}\label{thm:1.1}
Let $X$ be a compact K\"ahler manifold and let $S$ be a smooth hypersurface in $X$, defined by a global holomorphic section $s$ of the line bundle $(S)$. Let $E$ be a holomorphic vector bundle over $X$. Assume that $E$ has a smooth hermitian metric $h$ and $(S)$ has a smooth metric $\psi$ satisfying the curvature assumptions
\be\label{eq:curv1}
i\Theta^h\geq_N0
\ee
and
\be\label{eq:curv2}
i\Theta^h-\delta i\partial\dbar\psi\otimes I\geq_N0
\ee
with $\delta>0$. Assume moreover that $s$ is normalized so that
$$|s|^2e^{-\psi}\leq e^{-1/\delta}.$$
Then for any global holomorphic section $u$ of $K_S\otimes E|_S$ there exists a global holomorphic section $U$ of $K_X\otimes(S)\otimes E$ such that
$$U=ds\wedge u$$
on $S$, and such that $U$ satisfies the estimate
$$\int_Xc_n\langle U,U\rangle_he^{-\psi}\leq C\int_Sc_{n-1}\langle u,u\rangle_h.$$
\end{thm}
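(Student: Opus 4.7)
The strategy is the classical one of Ohsawa-Takegoshi, with scalar curvature inequalities replaced by Nakano curvature inequalities for the vector bundle, which is why the hypotheses (\ref{eq:curv1})--(\ref{eq:curv2}) are stated in the Nakano sense. First I would construct a smooth (not necessarily holomorphic) extension: via the adjunction isomorphism $K_S\cong K_X\otimes(S)|_S$, $\alpha\mapsto (ds\wedge\alpha)|_S$, the section $u$ of $K_S\otimes E|_S$ corresponds to a smooth section of $K_X\otimes(S)\otimes E$ on $S$, which I extend smoothly across $X$ to a global smooth section $\tilde U$. The problem then reduces to producing a smooth section $v$ of $K_X\otimes(S)\otimes E$ with $v|_S=0$, $\dbar v=\dbar\tilde U$, and the advertised $L^2$-bound; then $U:=\tilde U-v$ is the desired holomorphic extension.

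To solve the $\dbar$-equation, for small $\epsilon>0$ I would introduce $\eta_\epsilon:=-\log(|s|^2 e^{-\psi}+\epsilon)$ and a smooth cutoff $\chi_\epsilon$ of $\eta_\epsilon$ concentrating $\tilde U$ in a neighbourhood of $S$, and solve $\dbar v_\epsilon=\dbar(\chi_\epsilon\tilde U)$ by the twisted $L^2$-method applied to $E\otimes(S)$-valued $(n,1)$-forms. The central tool is the twisted Bochner-Kodaira-Nakano inequality: for a smooth positive twisting function $\tau$ and a smooth compactly supported $(n,1)$-form $w$ with values in $E\otimes(S)$, one has schematically
\be\label{eq:bkn}
\|\sqrt{\tau}\,\dbar^* w\|^2+\|\sqrt{\tau}\,\dbar w\|^2\geq\int_X\langle[\tau(i\Theta^h+i\partial\dbar\psi\otimes I)-i\partial\dbar\tau\otimes I,\Lambda_\omega]w,w\rangle_h\,e^{-\psi},
\ee
modulo a $|\partial\tau|^2/\tau$ error which one absorbs by an additional exponential weight (the Donnelly-Fefferman trick). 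The Ohsawa-Takegoshi choice of $\tau$ as a specific function of $\eta_\epsilon$, combined with the normalization $|s|^2 e^{-\psi}\leq e^{-1/\delta}$, is what makes the right-hand side of (\ref{eq:bkn}) bounded below by a positive multiple of $\|w\|^2$; the required Nakano positivity of $\tau(i\Theta^h+i\partial\dbar\psi\otimes I)-i\partial\dbar\tau\otimes I$ is supplied by (\ref{eq:curv1})--(\ref{eq:curv2}), since $-i\partial\dbar\tau\otimes I$ is a scalar multiple of $I$ and so only shifts the matrix-valued curvature by a scalar $(1,1)$-form.

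Passing to a weak limit $v=\lim_{\epsilon\to 0}v_\epsilon$, the datum $\dbar(\chi_\epsilon\tilde U)$ concentrates along $S$, and the Ohsawa-Takegoshi residue computation converts the uniform bound on $v_\epsilon$ into the estimate
$$\int_X c_n\langle v,v\rangle_h e^{-\psi}\leq C\int_S c_{n-1}\langle u,u\rangle_h,$$
while simultaneously forcing $v|_S=0$; holomorphy of $U:=\tilde U-v$ follows from $\dbar v=\dbar\tilde U$. The main obstacle I anticipate is precisely the matrix-valued algebra of Nakano positivity: the scalar Ohsawa-Takegoshi manipulations---in particular the algebraic inequality $\tau\cdot i\partial\dbar\psi-i\partial\dbar\tau\geq\varepsilon i\partial\dbar\psi$ in the chosen regime of $\tau$---must be performed with the tensor factor $I$ adjoined, and one must verify that the resulting matrix inequality remains Nakano positive. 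Since scalar $(1,1)$-forms $\alpha\geq 0$ give Nakano semi-positive $\alpha\otimes I$, the twisting decouples cleanly from the vector bundle structure, and the argument runs in parallel with the line-bundle case once the vector-valued Bochner-Kodaira-Nakano identity is in place.
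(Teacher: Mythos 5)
Your proposal is a correct outline, but it follows a genuinely different route from the paper. You run the classical Ohsawa--Takegoshi machine: smooth extension $\tilde U$ via adjunction, cutoffs $\chi_\epsilon$ and regularized weights $\eta_\epsilon=-\log(|s|^2e^{-\psi}+\epsilon)$, the twisted Bochner--Kodaira--Nakano inequality with a scalar twist $\tau(\eta_\epsilon)$, and a limit $\epsilon\to0$ with a residue computation. The paper instead reduces the problem to the single $\dbar$-equation $\dbar v=u\wedge[S]$ with the current of integration $[S]$ on the right-hand side, formulates it dually \`a la H\"ormander in the weighted space $L^2(e^w)$ with $w=-r\log(|s|^2e^{-\psi})$, $0<r<1$, and proves the key a priori inequality (Lemma \ref{lma:basic}) by Siu's $\partial\dbar$-Bochner--Kodaira method: the identity (\ref{eq:basic}) is multiplied by $w$ and by $W=1-e^{-w}$ and integrated, with the Lelong--Poincar\'e formula $i\partial\dbar w=ri\partial\dbar\psi-r[S]$ producing the surface integral $\int_S c_{n-1}\langle\gamma,\gamma\rangle$ directly --- no regularization of $[S]$, no smooth extension, no cutoff, and no passage to the limit. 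What your route buys is familiarity and a statement-by-statement parallel with the standard line-bundle proof; what the paper's route buys is that all the delicate $\epsilon$-limiting and residue analysis is replaced by two integrations by parts against explicit weights. Your central structural observation --- that the twist is scalar, so $-i\partial\dbar\tau\otimes I$ only shifts the curvature by a scalar $(1,1)$-form and the hypotheses (\ref{eq:curv1})--(\ref{eq:curv2}) feed in exactly as in the line-bundle case --- is correct and is equally the reason the paper's method goes through. One imprecision to fix if you carry your plan out: under the stated semi-positivity the curvature operator in your twisted inequality is \emph{not} bounded below by a positive multiple of $\|w\|^2$; what the Ohsawa--Takegoshi choice of $\tau$ actually delivers is that the operator dominates the concentrated datum $\dbar\chi_\epsilon\wedge\tilde U$ through its $i\partial\tau\wedge\dbar\tau$ part, and it is this domination (not uniform positivity) that the residue computation requires.
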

Here the curvature assumptions mean that the expressions should be non-negatively curved in the sense of Nakano, and $\langle,\rangle_h$ denotes the bilinear map on bundle-valued forms associated to $h$; see section 2 where these notions are reviewed. 

There are many different ways of proving the line bundle version of the extension theorem, most of which are rather involved. Following Berndtsson \cite{B3} the main idea behind our proof is to first show that finding an extension is equivalent to solving the $\dbar$-equation
\be\label{eq:fund}
\dbar v=u\wedge[S]
\ee
where $[S]$ denotes the current of integration on $S$. Then we proceed to show that a solution to this equation exists by applying $L^2$ methods. However applying the standard existence results to (\ref{eq:fund}) will not work in this case, since the right hand side is a current and not an $L^2$-valued form, and this is where the analysis starts to get involved.

We will handle this analysis by following the approach taken in Berndtsson \cite{B1} where he arrives at the existence result and the estimate using a modified version of the $\partial\dbar$-Bochner-Kodaira method introduced by Siu in \cite{S}. In a previous paper, \cite{R}, we have shown that this method works equally well when dealing with vector bundle valued forms. Hence in this paper we prove Theorem \ref{thm:1.1} by showing that the modified method also can be adapted to vector bundles. In fact this approach works almost without change and so our presentation will follow that of \cite{B1} rather closely. This explains why the vector and line bundle versions look so similar.

After the publication of \cite{OT}, Ohsawa extended the theorem in different directions in a long series of papers. In one of these papers, \cite{O}, he obtains a result which shares some similarities to our extension theorem, although the formulation is quite different from ours, (\cite{O}, Theorem 4). We believe that our compact K\"ahler setting is slightly more general, as \cite{O} Theorem 4 only treats complex manifolds that become Stein after removing a closed subset. The main difference, however, lies in our methods of proof. We consider our adaptation of the $\partial\dbar$-Bochner-Kodaira method to the vector bundle setting to be our main originality. Furthermore, Guan and Zhou have recently proven a much more general version of the extension theorem, and also managed to determine the optimal constant in the $L^2$-estimate, (\cite{GZ}, Theorem 2.1).

For a general vector bundle $F$, a method that is widely used when one wants to generalize a result that is already known for line bundles to vector bundles, is to study the projective fiber bundle $\pi:\mathbb{P}(F)\to X$ associated to $F$, whose fiber at each point $x\in X$ is the projective space of lines in $F^*_x$. There is a naturally defined line bundle $\mathcal{O}_{\mathbb{P}(F)}(1)$ over $\mathbb{P}(F)$ which, vaguely speaking, contains all the information in $F$. Hence by studying $\mathcal{O}_{\mathbb{P}(F)}(1)\to\mathbb{P}(F)$ instead of $F\to X$, one reduces the problem back to the line bundle case. (These constructions will be reviewed in section 4.)

Now demanding that a hermitian metric is curved in the sense of Nakano is a rather strong condition, and so one may rightfully wonder what curvature assumptions this reduction procedure yields. To compare these two approaches we will first need to transform Theorem \ref{thm:1.1} to the non-adjoint case.

\begin{thm}\label{thm:1.2}
Let $F$ be a holomorphic vector bundle over a compact K\"ahler manifold $X$ and let $S$ be a smooth hypersurface in $X$, defined by a global holomorphic section $s$ of the line bundle $(S)$. Assume that $F$, $(S)$ and $K_X$ the canonical bundle of $X$ have metrics $h$, $\psi$ and $\phi_{K_X}$ respectively, satisfying the curvature assumptions
\be\label{eq:vb_curv1}
i\Theta^h-(1+\delta)i\partial\dbar\psi\otimes I-i\partial\dbar\phi_{K_X}\otimes I\geq_{N}0
\ee
and
\be\label{eq:vb_curv2}
i\Theta^h-i\partial\dbar(\psi+\phi_{K_X})\otimes I\geq_{N}0.
\ee
with $\delta>0$. Assume moreover that $s$ is normalized so that
$$|s|^2e^{-\psi}\leq e^{-1/\delta}.$$
Then any holomorphic section $U_0$ of $F$ over $S$ extends holomorphically to a section $U$ of the same bundle over $X$ satisfying the estimate
$$\int_X(U,U)_h\frac{\omega^n}{n!}\leq C\int_S(U_0,U_0)_h\frac{dS}{|ds|^2e^{-\psi}}$$
where $dS$ denotes the surface (or volume) measure on $S$ induced by the K\"ahler metric $\omega$.
\end{thm}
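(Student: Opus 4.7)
The natural approach is to deduce this non-adjoint version from Theorem \ref{thm:1.1} by an ``adjunction twist''. I would set
\[
E := F \otimes K_X^{-1} \otimes (S)^{-1},
\]
equipped with the tensor-product metric $h_E := h \otimes e^{\phi_{K_X}} \otimes e^{\psi}$ induced by the given metrics. This yields an isometric identification $K_X \otimes (S) \otimes E = F$, and, via the adjunction formula $K_S = K_X|_S \otimes (S)|_S$, also an identification $K_S \otimes E|_S = F|_S$. Under the latter, the given holomorphic section $U_0$ of $F|_S$ corresponds to a holomorphic section $u$ of $K_S \otimes E|_S$, which is precisely the input required by Theorem \ref{thm:1.1}.

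Next, I would check that the hypotheses match. Using
\[
i\Theta^{h_E} = i\Theta^{h} - i\partial\dbar\phi_{K_X} \otimes I - i\partial\dbar\psi \otimes I,
\]
condition \eqref{eq:curv1} for $h_E$ becomes \eqref{eq:vb_curv2}, and condition \eqref{eq:curv2} for $h_E$ (with the same $\delta$) becomes \eqref{eq:vb_curv1}. The normalization of $s$ is identical in both statements. Theorem \ref{thm:1.1} then produces a holomorphic section $U$ of $K_X \otimes (S) \otimes E$ with $U|_S = ds \wedge u$; transported through the identification $K_X \otimes (S) \otimes E = F$, this $U$ is a holomorphic section of $F$ that restricts to $U_0$ on $S$.

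The main remaining step, and the one that will require the most care, is to rewrite the $L^2$ estimate of Theorem \ref{thm:1.1} in the form required here. For an $(n,0)$-form $U$ valued in $(S) \otimes E$, a pointwise frame computation gives
\[
c_n \langle U, U \rangle_{h_E}\, e^{-\psi} = (U, U)_{h}\, \frac{\omega^n}{n!},
\]
provided $\phi_{K_X}$ is chosen to be the metric on $K_X$ induced by the K\"ahler form $\omega$ (a cosmetic restriction: a general $\phi_{K_X}$ can be reduced to this one by absorbing a smooth factor into $h$ without altering the Nakano hypotheses). On the hypersurface $S$, the identification $K_S \otimes E|_S = F|_S$ is \emph{not} an isometry for the natural norms: comparing $c_{n-1} \langle u, u \rangle_{h_E}$ to the surface measure $dS$ via the wedge relation $U_0 = ds \wedge u$ introduces exactly the squared norm of $ds$ as a section of $\Omega^1_X \otimes (S)$, namely $|ds|^2 e^{-\psi}$, in the denominator. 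Substituting both identities into the conclusion of Theorem \ref{thm:1.1} yields the estimate of Theorem \ref{thm:1.2}.
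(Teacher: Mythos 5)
Your proposal is correct and follows essentially the same route as the paper: twist by $E=F\otimes(S)^{-1}\otimes K_X^{-1}$ with the product metric, observe that \eqref{eq:curv1} and \eqref{eq:curv2} for the twisted metric become \eqref{eq:vb_curv2} and \eqref{eq:vb_curv1}, apply Theorem \ref{thm:1.1}, and convert the two norms using the metric $\phi_\omega$ induced by $\omega$ on $K_X$ and the representation of $[S]$ in terms of $dS$ and $\|ds\|^2$. The only (immaterial) difference is that you reduce a general $\phi_{K_X}$ to $\phi_\omega$ by absorbing the smooth discrepancy into $h$, whereas the paper absorbs it into $\psi$.
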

Just as in the line bundle case, we will see that it is not very difficult to deduce Theorem \ref{thm:1.2} once Theorem \ref{thm:1.1} has been established.

Now we can reduce the extension problem stated in Theorem \ref{thm:1.2} to line bundles by studying $\mathcal{O}_{\mathbb{P}(F)}(1)$ and $\pi^*(S)$ over $\mathbb{P}(F)$ instead. In this setting the non-adjoint version of the Ohsawa-Takegoshi theorem states that an extension is possible if the induced metrics $\pi^*h$ and $\pi^*\psi$ satisfy the curvature assumptions
\be\label{eq:lb_curv1}
i\Theta^{\mathcal{O}_{\mathbb{P}(F)}(1)}\geq(1+\delta)i\Theta^{\pi^*(S)}+i\Theta^{K_{\mathbb{P}(F)}}
\ee
and
\be\label{eq:lb_curv2}
i\Theta^{\mathcal{O}_{\mathbb{P}(F)}(1)}\geq i\Theta^{\pi^*(S)}+i\Theta^{K_{\mathbb{P}(F)}}.
\ee
The question is what conditions these imply in the original vector bundle setting. In section 4 we will prove that (\ref{eq:lb_curv1}) and (\ref{eq:lb_curv2}) imply (\ref{eq:vb_curv1}) and (\ref{eq:vb_curv2}). Hence we will see that although being curved in the sense of Nakano is a strong condition to impose on a metric, the conditions that arise when one reduces the problem to line bundles are in fact even stronger.

A key ingredient in proving these implications will be a famous theorem of Demailly and Skoda \cite{DS} which states that if a vector bundle $E$ is non-negatively curved in the sense of Griffiths, then the vector bundle $E\otimes\det E$ is non-negative in the sense of Nakano.

\section*{Acknowledgments}
\noindent It is a pleasure to thank Bo Berndtsson for inspiring and helpful discussions. I would also like to thank Mihai P{\u{a}}un for bringing the previous work by Ohsawa, \cite{O}, to my attention.

\section{The setting} 
\noindent Let $(X,\omega)$ be a compact K\"ahler manifold and let $(E,h)$ be a hermitian, holomorphic vector bundle over $X$. Then we get a well-defined bilinear form, which we denote by $\langle,\rangle$, for forms on $X$ with values in $E$ by letting $\langle\alpha\otimes s,\beta\otimes t\rangle:=\alpha\wedge\bar{\beta}\ (s,t)_h$ for forms $\alpha,\beta$ and sections $s,t$, and then extend to arbitrary forms with values in $E$ by linearity. Furthermore we denote the Chern connection associated to this bilinear form by $D=D'+\dbar$ and the curvature by $\Theta=D^2=D'\dbar+\dbar D'$.

Now let $\{dz_j\}$ be orthonormal coordinates at a point and let $\alpha$ be a form of arbitrary bidegree with values in $E$ so that
$$\alpha=\sum\alpha_{IJ}dz_I\wedge d\bar{z}_J$$
where $\{\alpha_{IJ}\}$ are sections of $E$. Then one can show that the norm
\be\label{eq:norm}
\|\alpha\|^2:=\sum\|\alpha_{IJ}\|_h^2
\ee
is independent of the choice of orthonormal basis and that if $\eta$ is an $E-$valued form of bidegree $(p,0)$, then
\be\label{eq:norm_dV}
\|\eta\|^2dV_\omega=c_p\langle\eta,\eta\rangle\wedge\omega_{n-p}
\ee
where $c_p:=i^{p^2}$, $dV_\omega:=\omega^n/n!$ and $\omega_{n-p}:=\omega^{n-p}/(n-p)!$. A similar formula holds for $E-$valued forms of bidegree $(0,q)$, so that $\|\eta\|=\|\bar{\eta}\|$. We denote the set of holomorphic forms of bidegree $(p,0)$ with values in $E$ by $\Omega^{(p,0)}(X,E)$

Polarizing (\ref{eq:norm}) we see that if
$$\beta=\sum\beta_{IJ}dz_I\wedge d\bar{z}_J$$
is another form with values in $E$ which is of the same bidegree as $\alpha$, we get a well defined inner product on $E-$valued forms through
$$(\alpha,\beta):=\sum(\alpha_{IJ},\beta_{IJ})_h.$$
With respect to this inner product we can then define the formal adjoint of the $\dbar$ operator with respect to the metric $h$ through
\be\label{eq:adjoint}
\int_X(\dbar\alpha,\beta)dV_\omega=\int_X(\alpha,\dbar^*_h\beta)dV_\omega
\ee
for all $E-$valued forms $\alpha$ and $\beta$ of appropriate bidegrees.

Given any $(n,p)-$form $\alpha$ it follows from a computation in orthonormal coordinates that there exists an $(n-p,0)-$form $\gamma_\alpha$ such that
$$\alpha=\gamma_\alpha\wedge\omega_p.$$
Namely, if $\alpha$ is given in orthonormal coordinates by
$$\alpha=\sum_{|J|=p}\alpha_Jdz\wedge d\bar{z}_J$$
where $dz:=dz_1\wedge\ldots\wedge dz_n$, then $\gamma_\alpha$ will be given by
$$\gamma_\alpha=\sum_{|J|=p}\varepsilon_J\alpha_Jdz_{J^c}$$
where $\varepsilon_J$ are unimodular constants. It is immediate that the existence of $\gamma_\alpha$ is not affected by requiring $\alpha$ to be $E-$valued and furthermore it is clear that in this case
$$\|\alpha\|^2=\|\gamma_\alpha\|^2.$$
Together with (\ref{eq:norm_dV}) this in turn implies that
$$c_{n-p}\langle\alpha,\gamma_\alpha\rangle=c_{n-p}\langle\gamma_\alpha,\gamma_\alpha\rangle\wedge\omega_p=\|\gamma_\alpha\|^2dV_\omega=\|\alpha\|^2dV_\omega$$
and polarizing this we arrive at
\be\label{eq:product}
(\alpha,\beta)dV_\omega=c_{n-p}\langle\alpha,\gamma_\beta\rangle
\ee
for any other $E-$valued $(n,p)-$form $\beta$.

Using this last formula we can deduce a very useful relation between the formal adjoint of $\dbar$ and the $(1,0)-$part of the Chern connection. If we let $\alpha$ be an $E-$valued $(n,p-1)-$form but keep $\beta$ as before we have that on the one hand
$$\int_X(\dbar\alpha,\beta)dV_\omega = \int_Xc_{n-p}\langle\dbar\alpha,\gamma_\beta\rangle=(-1)^{n-p}\int_Xc_{n-p}\langle\alpha,D'\gamma_\beta\rangle$$
and on the other hand
$$\int_X(\alpha,\dbar_h^*\beta)dV_\omega=\int_Xc_{n-p+1}\langle\alpha,\gamma_{\dbar_h^*\beta}\rangle=\int_X(-1)^{n-p}ic_{n-p}\langle\alpha,\gamma_{\dbar_h^*\beta}\rangle.$$
Hence we see that
$$D'\gamma_\beta=-i\gamma_{\dbar_h^*\beta}$$
so that in particular
\be\label{eq:ad_norm}
\|D'\gamma_\beta\|^2=\|\gamma_{\dbar_h^*\beta}\|^2=\|\dbar_h^*\beta\|^2.
\ee

Finally we end this section by some remarks concerning curvature in the sense of Griffiths and Nakano. Given a hermitian metric $h$ on a holomorphic vector bundle $E$, the curvature $\Theta$ associated to $h$ is locally a matrix of $(1,1)-$forms which we write as
$$\Theta=\sum_{j,k=1}^n\Theta_{jk}dz_j\wedge d\bar{z}_k$$
where $\Theta_{jk}$ are $r\times r$ matrix-values functions on $X$, $r$ being the rank of $E$. We then say that $E$ is non-negatively curved in the sense of Griffiths if for any section $u$ of $E$ and any vector $v$ in $\C^n$
$$\sum_{j,k=1}^n\big(\Theta_{jk}u,u\big)v_j\bar{v}_k\geq0.$$
$E$ is said to be non-negatively curved in the sense of Nakano if
$$\sum_{j,k=1}^n\big(\Theta_{jk}u_j,u_k\big)\geq0$$
for any $n-$tuple $(u_1,\ldots,u_n)$ of sections of $E$. Taking $u_j=uv_j$ we see that non-negativity in the sense of Nakano implies non-negativity in the sense of Griffiths.

Now if $\gamma$ is an $E-$valued $(n-1,0)-$form we can locally write it as
$$\gamma=\sum_{j=1}\gamma^j\widehat{dz_j}$$
where $\widehat{dz_j}$ denotes the wedge product of all $dz_k$ except $dz_j$ ordered so that $dz_j\wedge\widehat{dz_j}=dz_1\wedge\ldots\wedge dz_n$. One can then verify that
\be\label{eq:Nakano}
ic_{n-1}\langle\Theta\wedge\gamma,\gamma\rangle=\sum_{j,k=1}^n\big(\Theta_{jk}\gamma^j,\gamma^k\big)dV_\omega.
\ee
Hence if $\Theta$ is non-negatively curved in the sense of Nakano then
$$ic_{n-1}\langle\Theta\wedge\gamma,\gamma\rangle\geq0$$
for all $E-$valued $(n-1,0)-$forms $\gamma$.

\section{The adjoint version}
\noindent The aim of this section is to prove Theorem \ref{thm:1.1}. Now as mentioned in the introduction we begin by showing that finding an extension is equivalent to solving a $\dbar-$equation.

Let $u\in\Omega^{(n-1,0)}(S,K_S\otimes E|_S)$ and assume first that there exists an extension $U\in\Omega^{(n,0)}(X,K_X\otimes(S)\otimes E)$ such that
$$U=ds\wedge u$$
on $S$. Since $s$ is a holomorphic section vanishing to degree one precisely on $S$, the Lelong-Poincar\'e formula says that $[S]$, the current of integration on $S$, is given by
$$[S]=\frac{i}{2\pi}\partial\dbar\log|s|^2=\frac{i}{2\pi}\partial s\wedge\dbar\left(\frac{1}{s}\right).$$
If we set
$$v'=-\frac{i}{2\pi}\frac{U}{s}$$
then $v'$ is a section of $K_X\otimes E$ and we get that
$$\dbar v'=-\frac{i}{2\pi}\dbar\left(\frac{1}{s}\right)\wedge U=\frac{i}{2\pi}ds\wedge\dbar\left(\frac{1}{s}\right)\wedge u=u\wedge[S]$$
since $\dbar(1/s)$ vanishes outside of $S$. Hence we see that if such an extension exists then $v'$ is a solution of the $\dbar-$equation
\be\label{eq:dbar}
\dbar v=u\wedge[S].
\ee

Conversely, assume now that we can solve (\ref{eq:dbar}). If we extend $u$ smoothly in an arbitrary way to an $E-$valued $(n-1,0)-$form on the whole of $X$ and let
$$v'=-\frac{i}{2\pi}\frac{ds\wedge u}{s},$$
then $\dbar v'=u\wedge[S]$ is independent of the choice of extension since $[S]$ is supported on $S$. Hence $v-v'=h$ is holomorphic which in particular implies that
$$\dbar(sv)=\dbar(sv')=s\dbar v'=su\wedge[S]=0$$ 
so $sv$ is also holomorphic and satisfies
$$2\pi isv=ds\wedge u$$
on $S$. Thus $U=2\pi isv$ solves the extension problem.

To solve (\ref{eq:dbar}) we will use the method of proof for solving the $\dbar-$equation developed by H\"ormander, thereby obtaining an estimate for the solution in addition to existence. In fact if one is satisfied with just knowing that an extension exists and is not interested in the estimate then this will follow from the Nakano vanishing theorem if one also assumes that $\Theta>_N0$. This is a consequence of the well-known fact that the cohomology defined with currents and the cohomology defined with smooth forms are isomorphic, see e.g. \cite{GH}, Chapter 3.1 for a proof.

Now the main ideas behind H\"ormander's method of proof for $\dbar v=f$ where $f$ is a $\dbar-$closed $E$-valued $(n,q)$-form are the following, \cite{H2}. First one formulates the problem dually using a weighted scalarproduct
\be\label{eq:dual}
\int_X(v,\dbar^*_h\alpha)dV_\omega=\int_X(f,\alpha)dV_\omega,
\ee
where $\alpha$ is a smooth $E$-valued $(n,q)$-form. Then one shows that for any $\alpha\in Dom(\dbar_h^*)\cap Ker(\dbar)$
\be\label{eq:main}
\int_X\|\alpha\|^2dV_\omega\leq\int_X\|\dbar^*_h\alpha\|^2dV_\omega.
\ee
This is the most involved part of the proof and to do it one needs curvature assumptions and delicate approximation arguments. However, once (\ref{eq:main}) has been established one has shown that
$$\left|\int_X(f,\alpha)dV_\omega\right|^2\leq\int_X\|f\|^2dV_\omega\int_X\|\dbar^*_h\alpha\|^2dV_\omega$$
for all $\alpha\in Dom(\dbar_h^*)\cap Ker(\dbar)$ and then an argument using the Riesz representation theorem yields that there exists an $L^2-$function $v$ that satisfies (\ref{eq:dual}) and such that
$$\int_X\|v\|^2dV_\omega\leq C\int_X\|f\|^2dV_\omega.$$

Since in our case $f=u\wedge[S]$ is not in $L^2$, the analysis here gets more involved. Let $\alpha$ be a smooth $E-$valued $(n,1)-$form and write $\alpha=\gamma\wedge\omega$ (as there is no risk for confusion we will write $\gamma$ instead of $\gamma_\alpha$). Then by (\ref{eq:product})
$$\int_X(f,\alpha)dV_\omega=\int_Xc_{n-1}\langle f,\gamma\rangle=\int_Sc_{n-1}\langle u,\gamma\rangle$$
and so by the Cauchy-Schwartz inequality
$$\left|\int_X(f,\alpha)dV_\omega\right|^2\leq\int_S c_{n-1}\langle u,u\rangle\int_S c_{n-1}\langle\gamma,\gamma\rangle.$$


Now recall that if we managed to find a solution to (\ref{eq:dbar}) then $sv$ would be the sought for $L^2$ extension of $u$. Hence in that case the method of proof would produce an estimate for
\be\label{eq:est}
\int_Xc_n\langle sv,sv\rangle e^{-\psi}=\int_Xc_n\langle v,v\rangle|s|^2e^{-\psi}.
\ee
This observation leads us to study $L^2-$spaces with the weight $e^w$ where $w=-r\log\big(|s|^2e^{-\psi}\big)$ for $0<r<1$, since working backwards through H\"ormander's method of proof we will later see that in order to reach an estimate for (\ref{eq:est}) using the Riesz representation theorem we need to prove that
\be\label{eq:sc_pr}
\left|\int_X(f,\alpha)dV_\omega\right|^2\leq C\int_Xe^w\|\dbar^*_h\alpha\|^2dV_\omega=C\int_X e^wc_n\langle D'\gamma,D'\gamma\rangle
\ee
for any smooth, $E-$valued $(n,1)-$form $\alpha$.

Lemma \ref{lma:basic} below is the counterpart of (\ref{eq:main}) in H\"ormander's theorem and hence one of the main steps towards this goal. Just like (\ref{eq:main}) it is the most involved part of the proof of the extension theorem.

Now in Raufi \cite{R} we establish (\ref{eq:main}) by adapting the $\partial\dbar-$Bochner-Kodaira method to the setting of Nakano-positive vector bundles. This method was introduced by Siu in \cite{S} for negatively curved vector bundles and adapted by Berndtsson in \cite{B1} to positively curved line bundles. Simply put, in the Nakano-positive case, the key observation is that in order to reach (\ref{eq:main}) one should start by making the calculation
{\setlength\arraycolsep{2pt}
\begin{eqnarray}\label{eq:basic}
ic_{n-1}\partial\dbar\langle\gamma,\gamma\rangle &=& ic_{n-1}\Big(\langle\Theta\wedge\gamma,\gamma\rangle-\langle\dbar D'\gamma,\gamma\rangle+\langle\gamma,\dbar D'\gamma\rangle\Big) + \nonumber\\
& & {}\qquad\qquad\quad+\Big(\|\dbar_h^*\alpha\|^2+\|\dbar\gamma\|^2-\|\dbar\alpha\|^2\Big)dV_\omega.
\end{eqnarray}}
\!and then integrate this expression over $X$; see Berndtsson \cite{B1} or Raufi \cite{R} for more details.

\begin{lma}\label{lma:basic}
In the setting of Theorem \ref{thm:1.1}, let $w=-r\log\big(|s|^2e^{-\psi}\big)$ where $0<r<1$. Then for any smooth, $E$-valued $(n,1)$-form $\alpha$ with $\alpha=\gamma\wedge\omega$
$$\int_Sc_{n-1}\langle\gamma,\gamma\rangle\leq C\Big(\int_X e^wc_n\langle D'\gamma,D'\gamma\rangle+\int_X(w+1)\|\dbar\alpha\|^2dV_\omega\Big).$$
\end{lma}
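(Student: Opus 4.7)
The plan is to adapt the $\partial\dbar$-Bochner-Kodaira method of \cite{B1}, in the Nakano-positive vector-bundle formulation of \cite{R}, with the singular weight $e^w=(|s|^2e^{-\psi})^{-r}$ chosen so that integration over $X$ produces $\int_S c_{n-1}\langle\gamma,\gamma\rangle$ through the Lelong-Poincar\'e formula. Concretely, I would multiply the pointwise identity \eqref{eq:basic} by $e^w$ and integrate over $X$, handling the singularity of $e^w$ along $S$ by first regularising — replacing $|s|^2e^{-\psi}$ with $|s|^2e^{-\psi}+\varepsilon$ to obtain a smooth weight $w_\varepsilon$ — then invoking Stokes' theorem in the form $\int_X ic_{n-1}\partial\dbar(e^{w_\varepsilon}\langle\gamma,\gamma\rangle)=0$ on the compact $X$, and finally letting $\varepsilon\downarrow 0$.

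Expanding by the Leibniz rule together with \eqref{eq:basic}, the resulting identity splits in the limit into several pieces: the bulk term $\int_X e^w c_n\langle D'\gamma,D'\gamma\rangle$, obtained by rewriting $e^w\|\dbar_h^*\alpha\|^2\,dV_\omega$ via \eqref{eq:ad_norm}; a $\|\dbar\alpha\|^2$-term; a curvature-type term of the form $\int_X ic_{n-1}e^w\langle(\Theta-r\,i\partial\dbar\psi\otimes I)\wedge\gamma,\gamma\rangle\wedge\omega_{n-1}$, which combines the original $\Theta$ with the smooth part $r\,i\partial\dbar\psi$ of $i\partial\dbar w$; the singular contribution coming from the $-2\pi r[S]$ component in $i\partial\dbar w=-2\pi r[S]+r\,i\partial\dbar\psi$ (Lelong-Poincar\'e), which produces precisely the desired $\int_S c_{n-1}\langle\gamma,\gamma\rangle$ on the left; and first-order mixed terms in $\partial w$ and $\dbar w$. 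By \eqref{eq:Nakano} and the hypothesis \eqref{eq:curv2}, the curvature-type term is non-negative as soon as $r\leq\delta$ and may therefore be discarded; this is the step that requires Nakano rather than merely Griffiths positivity and constitutes the essential new feature of the vector-bundle version.

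To close the argument, the mixed first-order terms are estimated by Cauchy-Schwarz with a small parameter $\varepsilon'$: a fraction of $\int_X e^w c_n\langle D'\gamma,D'\gamma\rangle$ is absorbed, while the complementary pieces involving $e^w|\partial w|^2$ are re-expressed, by a further integration by parts against $\dbar w$, as a bound proportional to $\int_X(w+1)\|\dbar\alpha\|^2\,dV_\omega$; this is how the logarithmic weight $(w+1)$ replaces $e^w$ on the $\dbar\alpha$-term in the final estimate. The main technical obstacle is the delicate book-keeping of the limit $\varepsilon\downarrow 0$, since $e^w$ blows up along $S$ precisely as $i\partial\dbar w$ concentrates there; but once \eqref{eq:basic} and \eqref{eq:Nakano} are in hand, the whole calculation runs in parallel with the scalar argument of \cite{B1}, because — as explained in section 2 — the bundle-valued bilinear form $\langle\cdot,\cdot\rangle$ behaves formally like a scalar inner product.
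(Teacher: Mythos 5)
Your overall strategy is the right one --- the identity \eqref{eq:basic}, Lelong--Poincar\'e to create the surface integral, Nakano positivity via \eqref{eq:Nakano} and \eqref{eq:curv2}, and Cauchy--Schwarz on the cross terms --- but the concrete choice of multiplier is wrong, and the argument collapses at that point. You multiply \eqref{eq:basic} by $e^w=\big(|s|^2e^{-\psi}\big)^{-r}$ and integrate $i\partial\dbar\big(e^{w}\langle\gamma,\gamma\rangle\big)$. Since $i\partial\dbar e^{w}=e^{w}\big(i\partial\dbar w+i\partial w\wedge\dbar w\big)$, the $[S]$-component of $i\partial\dbar w$ appears multiplied by $e^{w}$, which equals $+\infty$ on $S$; so this term does \emph{not} produce $\int_S c_{n-1}\langle\gamma,\gamma\rangle$ with a finite coefficient. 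Moreover the remaining piece $e^{w}\,i\partial w\wedge\dbar w\wedge\langle\gamma,\gamma\rangle$ behaves like $|s|^{-2r-2}$ near $S$ and is not integrable, and the same divergence reappears on the other side of the identity when you integrate the $\langle\dbar D'\gamma,\gamma\rangle$ terms by parts against $\partial e^{w}=e^{w}\partial w$. A regularisation $w_\varepsilon=-r\log(|s|^2e^{-\psi}+\varepsilon)$ does not rescue this: both sides of the regularised identity blow up like $\varepsilon^{-r}$ as $\varepsilon\downarrow 0$, so no finite estimate survives the limit. In particular your final step, bounding the $e^{w}|\partial w|^2$ contributions by $\int_X(w+1)\|\dbar\alpha\|^2dV_\omega$, cannot work, since a divergent quantity is being bounded by a finite one.

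The paper's proof uses \emph{bounded-coefficient} multipliers instead, in two stages. First it multiplies \eqref{eq:basic} by $w$ itself: since $w$ is only logarithmically singular, Stokes gives $\int_X i\partial\dbar w\wedge c_{n-1}\langle\gamma,\gamma\rangle$ with $i\partial\dbar w=ri\partial\dbar\psi-r[S]$, so the surface integral appears with the finite coefficient $r$; the curvature terms are discarded using $w\geq r/\delta$ together with \eqref{eq:curv1}--\eqref{eq:curv2}, and the cross terms are split by Cauchy's inequality as $\int_Xe^{w}c_n\langle D'\gamma,D'\gamma\rangle+\int_Xe^{-w}c_n\langle\partial w\wedge\gamma,\partial w\wedge\gamma\rangle$ --- this is the only place $e^{w}$ enters, and the second term is integrable because of the damping factor $e^{-w}$. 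The leftover term $\int_Xe^{-w}c_n\langle\partial w\wedge\gamma,\partial w\wedge\gamma\rangle$ is then controlled by a second application of \eqref{eq:basic} with the bounded multiplier $W=1-e^{-w}$, chosen precisely because $i\partial\dbar W=\big(ri\partial\dbar\psi-i\partial w\wedge\dbar w\big)e^{-w}$ reproduces that term with the correct sign so it can be absorbed. This two-multiplier mechanism ($w$, then $1-e^{-w}$) is the essential point your proposal is missing; with it in place, the rest of your outline (Nakano positivity, absorption of a fraction of $\int_Xe^{w}c_n\langle D'\gamma,D'\gamma\rangle$) goes through as you describe.
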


\begin{proof}
We want to use the $\partial\dbar-$Bochner-Kodaira method but in order to get an integral over $S$ an extra twist is needed. Hence we multiply (\ref{eq:basic}) with $w$ before integrating it over $X$. Using Stokes' theorem on the resulting left hand side we then get
$$\int_Xc_{n-1}wi\partial\dbar\langle\gamma,\gamma\rangle=\int_Xc_{n-1}i\partial\dbar w\wedge\langle\gamma,\gamma\rangle.$$
The idea is that by the Lelong-Poincar\'e formula
$$i\partial\dbar w=ri\partial\dbar\psi-r[S]$$
and so
$$\int_Xc_{n-1}i\partial\dbar w\wedge\langle\gamma,\gamma\rangle=\int_Xc_{n-1}ri\partial\dbar\psi\wedge\langle\gamma,\gamma\rangle-r\int_Sc_{n-1}\langle\gamma,\gamma\rangle.$$
Furthermore from the normalization $|s|^2e^{-\psi}\leq e^{-1/\delta}$ we get
$$w=-r\log\big(|s|^2e^{-\psi}\big)\geq\frac{r}{\delta}>0$$
so that in particular
$$\int_Xw\Big(\|\dbar\gamma\|^2+\|\dbar^*_h\alpha\|^2\Big)dV_\omega\geq0.$$
Combining these two observations and regrouping the terms that result from (\ref{eq:basic}) after integration we thus arrive at
{\setlength\arraycolsep{2pt}
\begin{eqnarray}\label{eq:int}
&&c_{n-1}\Big(\int_Xw\langle i\Theta\wedge\gamma,\gamma\rangle-\int_Xri\partial\dbar\psi\wedge\langle\gamma,\gamma\rangle+r\int_S\langle\gamma,\gamma\rangle\Big)\leq\nonumber\\
&&\qquad\quad\leq ic_{n-1}\Big(\int_Xw\langle\dbar D'\gamma,\gamma\rangle-\int_Xw\langle\gamma,\dbar D'\gamma\rangle\Big) + \int_Xw\|\dbar\alpha\|^2dV_\omega.
\end{eqnarray}}

Now from (\ref{eq:Nakano}) we know that (\ref{eq:curv2}) implies that
$$c_{n-1}\langle i\partial\dbar\psi\wedge\gamma,\gamma\rangle\leq c_{n-1}\frac{1}{\delta}\langle i\Theta\wedge\gamma,\gamma\rangle$$
for any smooth $E-$valued $(n-1,0)-$form $\gamma$. Hence
$$c_{n-1}\Big(\int_Xw\langle i\Theta\wedge\gamma,\gamma\rangle-\int_Xri\partial\dbar\psi\wedge\langle\gamma,\gamma\rangle\Big)\geq\int_X\big(w-\frac{r}{\delta}\big)c_{n-1}\langle i\Theta\wedge\gamma,\gamma\rangle\geq0$$
by (\ref{eq:curv1}) combined with the fact that, as we have already seen, $w\geq r/\delta$. Thus our curvature assumptions yield
$$r\int_Sc_{n-1}\langle\gamma,\gamma\rangle\leq ic_{n-1}\Big(\int_Xw\langle\dbar D'\gamma,\gamma\rangle-\int_Xw\langle\gamma,\dbar D'\gamma\rangle\Big) + \int_Xw\|\dbar\alpha\|^2dV_\omega.$$
By Stokes' theorem and the compatibility of our bilinear form we have
$$\int_Xw\langle\dbar D'\gamma,\gamma\rangle=(-1)^{n-1}\int_Xw\langle D'\gamma,D'\gamma\rangle-\int_X\dbar w\wedge\langle D'\gamma,\gamma\rangle$$
and
$$\int_Xw\langle\gamma,\dbar D'\gamma\rangle=(-1)^n\int_Xw\langle D'\gamma,D'\gamma\rangle+(-1)^n\int_X\partial w\wedge\langle\gamma,D'\gamma\rangle$$
so that
{\setlength\arraycolsep{2pt}
\begin{eqnarray}
&& \int_Xwic_{n-1}\langle\dbar D'\gamma,\gamma\rangle-\int_Xwic_{n-1}\langle\gamma,\dbar D'\gamma\rangle= \nonumber\\
&& \quad=2\int_Xwc_n\langle D'\gamma,D'\gamma\rangle+\int_Xc_n\langle\partial w\wedge\gamma,D'\gamma\rangle+\int_Xc_n\langle D'\gamma,\partial w\wedge\gamma\rangle\nonumber
\end{eqnarray}}
\!\!where we have used that $ic_{n-1}(-1)^{n-1}=c_n$. Since $w\geq0$ implies that $w\leq e^w$ the first term on the right hand side causes no problem and by the Cauchy inequality the last two terms are dominated by
$$\int_Xe^wc_n\langle D'\gamma,D'\gamma\rangle+\int_Xe^{-w}c_n\langle\partial w\wedge\gamma,\partial w\wedge\gamma\rangle.$$
Hence altogether we have that
{\setlength\arraycolsep{2pt}
\begin{eqnarray}\label{eq:lma1}
r\int_Sc_{n-1}\langle\gamma,\gamma\rangle & \leq & 3\int_Xe^wc_n\langle D'\gamma,D'\gamma\rangle+\int_Xw\|\dbar\alpha\|^2dV_\omega+ \nonumber\\
&& \qquad\qquad\qquad+\int_Xe^{-w}c_n\langle\partial w\wedge\gamma,\partial w\wedge\gamma\rangle.
\end{eqnarray}}
\!\!The first two terms are exactly what we want and so only the last term remains. To estimate it we will once again use (\ref{eq:basic}) but this time we will multiply it with the term
$$W=1-e^{-w}$$
before integrating over $X$. The idea is that by choosing $W$ in this way we will get that
$$i\partial\dbar W=\big(ri\partial\dbar\psi-i\partial w\wedge\dbar w\big)e^{-w}.$$
Thus by applying Stokes' theorem to the term that results from the left hand side of (\ref{eq:basic}) and rearranging all the terms just like before we will arrive at
{\setlength\arraycolsep{2pt}
\begin{eqnarray}\label{eq:lma2}
&&c_{n-1}\Big(\int_X\!\!\!W\langle i\Theta\wedge\gamma,\gamma\rangle-\!\!\!\int_X\!\!\!e^{-w}\langle ri\partial\dbar\psi\wedge\gamma,\gamma\rangle\Big)+\!\!\int_X\!\!\!e^{-w}c_n\langle\partial w\wedge\gamma,\partial w\wedge\gamma\rangle\leq\nonumber\\
&& \qquad\qquad\leq ic_{n-1}\Big(\int_XW\langle\dbar D'\gamma,\gamma\rangle-\int_XW\langle\gamma,\dbar D'\gamma\rangle\Big) + \int_XW\|\dbar\alpha\|^2dV_\omega.
\end{eqnarray}}
\!\!Once again we can use the curvature assumptions to get rid of the first two terms since
{\setlength\arraycolsep{2pt}
\begin{eqnarray}
\int_XWc_{n-1}\langle i\Theta\wedge\gamma,\gamma\rangle & - & \int_Xe^{-w}rc_{n-1}\langle i\partial\dbar\psi\wedge\gamma,\gamma\rangle\geq\nonumber\\
&&\qquad\qquad\geq\int_X\big(W-\frac{r}{\delta}e^{-w}\big)c_{n-1}\langle i\Theta\wedge\gamma,\gamma\rangle\nonumber
\end{eqnarray}}
\!\!which is non-negative by (\ref{eq:curv1}) and the fact that
$$W-\frac{r}{\delta}e^{-w}=1-\big(1+\frac{r}{\delta}\big)e^{-w}\geq 1-\big(1+\frac{r}{\delta}\big)e^{-r/\delta}\geq0$$
for small enough $\delta$.

Just as before we now use Stokes' theorem on the first two terms on the right hand side of (\ref{eq:lma2}). This will once again yield
{\setlength\arraycolsep{2pt}
\begin{eqnarray}
&& \int_XWic_{n-1}\langle\dbar D'\gamma,\gamma\rangle-\int_XWic_{n-1}\langle\gamma,\dbar D'\gamma\rangle= \nonumber\\
&& \quad=2\int_XWc_n\langle D'\gamma,D'\gamma\rangle+\int_Xc_n\langle\partial W\wedge\gamma,D'\gamma\rangle+\int_Xc_n\langle D'\gamma,\partial W\wedge\gamma\rangle.\nonumber
\end{eqnarray}}
\!\!As $W\leq1$ and $e^w\geq1$ for small $\delta$ the first term causes no problems. Furthermore since $\partial W=e^{-w}\partial w $ we get by the Cauchy inequality that the last two terms are dominated by
$$C\Big(\int_Xe^{-w}c_n\langle D'\gamma,D'\gamma\rangle+\int_Xe^{-w}c_n\langle\partial w\wedge\gamma,\partial w\wedge\gamma\rangle\Big)$$
for some constant $C$. The second term here is precisely what we want to estimate and so it can be absorbed in the left hand side of (\ref{eq:lma2}). Also as $e^{-w}\leq1$ we have that for small $\delta$
$$\int_Xe^{-w}c_n\langle D'\gamma,D'\gamma\rangle\leq\int_Xe^wc_n\langle D'\gamma,D'\gamma\rangle.$$
Altogether we get
$$\int_Xe^{-w}c_n\langle\partial w\wedge\gamma,\partial w\wedge\gamma\rangle\leq C\Big(\int_Xe^wc_n\langle D'\gamma,D'\gamma\rangle+\int_X\|\dbar\alpha\|^2dV_\omega\Big).$$
Inserting this into our previous estimate (\ref{eq:lma1}) we finally arrive at
$$\int_Sc_{n-1}\langle\gamma,\gamma\rangle\leq C\Big(\int_Xe^wc_n\langle D'\gamma,D'\gamma\rangle+\int_X(w+1)\|\dbar\alpha\|^2dV_\omega\Big)$$
which proves the lemma.
\end{proof}

Recall that we are trying to estimate the scalar product between $f=u\wedge[S]$ and $\alpha$, with the norm of $\|\dbar^*_h\alpha\|^2$ for a smooth, $E-$valued, $(n,1)-$form $\alpha$ as in (\ref{eq:sc_pr}). This will follow from the previous lemma applied to $\dbar-$closed forms, and hence we want to decompose $\alpha=\alpha_1+\alpha_2$ where $\alpha_1$ is $\dbar-$closed and $\alpha_2$ is orthogonal to the space of $\dbar-$closed forms. However since $f$ is not an $L^2$ form the analysis once again gets a little more involved. This is the content of the following lemma.

\begin{lma}\label{lma:lma2}
In the setting of Theorem \ref{thm:1.1}, let $f=u\wedge[S]$ and $w=-r\log\big(|s|^2e^{-\psi}\big)$ where $0<r<1$. Then
$$\left|\int_X(f,\alpha)dV_\omega\right|^2\leq C\int_Sc_{n-1}\langle u,u\rangle\int_Xe^wc_n\langle D'\gamma,D'\gamma\rangle$$
for all smooth, $E-$valued, $(n,1)-$forms $\alpha$, with $\alpha=\gamma\wedge\omega$.
\end{lma}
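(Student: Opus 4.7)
The proof reduces the general statement to the case where $\alpha$ is $\dbar$-closed, so that Lemma \ref{lma:basic} applies without the penalty term $\int_X(w+1)\|\dbar\alpha\|^2 dV_\omega$.

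The starting point is the identity
$$\int_X (f,\alpha)dV_\omega = \int_S c_{n-1}\langle u,\gamma\rangle,$$
already derived above using (\ref{eq:product}), combined with the Cauchy-Schwarz inequality
$$\left|\int_X(f,\alpha)dV_\omega\right|^2 \leq \int_S c_{n-1}\langle u,u\rangle\int_S c_{n-1}\langle\gamma,\gamma\rangle.$$
If $\dbar\alpha=0$, Lemma \ref{lma:basic} bounds the second factor by $C\int_X e^w c_n\langle D'\gamma,D'\gamma\rangle$, yielding exactly the desired inequality. The task is therefore to replace a general smooth $\alpha$ by a $\dbar$-closed one without inflating the right-hand side.

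I would do this via an orthogonal decomposition $\alpha = \alpha_1 + \alpha_2$ in an appropriate weighted $L^2$ space (the weight chosen so that the projection onto $\Ker\dbar$ is compatible with the $e^w$-weighted expression that appears on the right-hand side of the claim), with $\alpha_1 \in \Ker\dbar$ and $\alpha_2 \perp \Ker\dbar$. Writing $\alpha_j=\gamma_j\wedge\omega$, the strategy rests on two points: (i) $\int_X(f,\alpha_2)dV_\omega = 0$, so that only the $\dbar$-closed component $\alpha_1$ contributes to the pairing with $f$; and (ii) $\int_X e^w c_n\langle D'\gamma_1,D'\gamma_1\rangle \leq \int_X e^w c_n\langle D'\gamma,D'\gamma\rangle$, which by (\ref{eq:ad_norm}) becomes a statement about the weighted $\dbar_h^*$-norms of $\alpha_1$ and $\alpha$ and should follow from Pythagoras once the weighted inner product is chosen suitably. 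Granting (i) and (ii), applying Lemma \ref{lma:basic} to $\alpha_1$ and invoking the Cauchy-Schwarz bound above completes the proof.

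The central obstacle is (i), and this is precisely the issue the author flags by noting that $f$ is not $L^2$. Formally, $f = u\wedge[S]$ is $\dbar$-closed as a current, since $\dbar[S]=0$ by the Lelong-Poincar\'e formula and $u$ is holomorphic on $S$; hence for any smooth form of the shape $\dbar_h^*\beta$ one has $\int_X(f,\dbar_h^*\beta)dV_\omega = \int_X(\dbar f,\beta)dV_\omega = 0$. To promote this to the projection $\alpha_2$, which lies only in the closure of $\Im(\dbar_h^*)$, I would approximate $\alpha_2$ by smooth forms of the form $\dbar_h^*\beta_k$ and pass to the limit, exploiting that $(f,\cdot)$ is a well-defined continuous functional on smooth forms via integration over $S$. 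The delicate point is to choose the approximation in a topology strong enough that the traces on $S$ also converge, which is where the genuine analytic work enters.
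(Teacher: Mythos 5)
Your skeleton is the same as the paper's: decompose $\alpha=\alpha_1+\alpha_2$ with $\alpha_1\in\Ker\dbar$ and $\alpha_2$ orthogonal to $\Ker\dbar$, show that the pairing with $f$ only sees $\alpha_1$, and apply Lemma \ref{lma:basic} to the $\dbar$-closed piece. But the proposal has a genuine gap exactly at the step you yourself flag as delicate, namely point (i). Approximating $\alpha_2$ by smooth forms $\dbar^*_h\beta_k$ cannot work in the topology you have available: $L^2$ convergence of $\dbar^*_h\beta_k$ to $\alpha_2$ gives no control over the pairing with $f=u\wedge[S]$, which is a measure supported on $S$, and you do not say how to obtain convergence of traces on $S$. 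The paper avoids the approximation entirely. Since $X$ is compact the ranges of $\dbar$ and $\dbar^*_h$ are closed, so $\alpha_2=\dbar^*_h\chi$ for an honest $E$-valued $(n,2)$-form $\chi$, and elliptic regularity makes $\alpha_1$, $\alpha_2$, and hence $\chi$ smooth. Then $\int_X(f,\alpha_2)dV_\omega=\int_X(f,\dbar^*_h\chi)dV_\omega=\int_X(\dbar f,\chi)dV_\omega=0$ is a legitimate pairing of the closed current $f$ with a smooth form, and no limit is taken. This closed-range-plus-regularity argument is the missing idea.

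Your point (ii) is also resolved more simply than you anticipate: no weighted Pythagoras is needed, and the decomposition is taken in the plain unweighted $L^2$ space. Because $\alpha_2=\dbar^*_h\chi$ and $(\dbar^*_h)^2=0$, one has $\dbar^*_h\alpha_2=0$ identically, hence $D'\gamma_2=0$ by (\ref{eq:ad_norm}) and $D'\gamma=D'\gamma_1$ pointwise; the two weighted integrals in your (ii) are therefore equal for any weight whatsoever. Your instinct to tune the weight of the orthogonal decomposition to match the $e^w$ on the right-hand side is addressing a difficulty that does not arise, and as stated it would actually create one, since orthogonality to $\Ker\dbar$ in an $e^w$-weighted space characterizes the range of the weighted adjoint, not of $\dbar^*_h$, breaking the link to $D'\gamma$ via (\ref{eq:ad_norm}).
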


\begin{proof}
It follows from the Lelong-Poincar\'e formula for $[S]$ and the fact that $u$ is holomorphic, that $f=u\wedge[S]$ is $\dbar-$closed. Hence if we decompose $\alpha=\alpha_1+\alpha_2$ where $\alpha_1$ is $\dbar-$closed and $\alpha_2$ is orthogonal to the space of $\dbar-$closed forms we would like to deduce that
$$\int_X(f,\alpha)dV_\omega=\int_X(f,\alpha^1)dV_\omega.$$
However, since $f$ is a current and not an $L^2$ form, this will require some work. There are two main facts here which we will use but not prove, see e.g. \cite{W}, Chapter 6. The first one is that since $X$ is compact, the ranges of $\dbar$ and $\dbar^*_h$ are closed. Hence since $\alpha_2$ is orthogonal to the space of $\dbar-$closed forms, there exists an $E-$valued $(n,2)-$form $\chi$ such that $\alpha^2=\dbar^*_h\chi$. The second fact that we will need is that due to elliptic regularity, both $\alpha^1$ and $\alpha^2$ are still smooth. This implies that $\chi$ is smooth as well. Thus
$$\int_X(f,\alpha^2)dV_\omega=\int_X(f,\dbar_h^*\chi)dV_\omega=\int_X(\dbar f,\chi)dV_\omega=0.$$
In particular, if we let $\alpha^j=\gamma^j\wedge\omega$ we get
$$\left|\int_X(f,\alpha)dV_\omega\right|^2=\left|\int_X(f,\alpha^1)dV_\omega\right|^2=\left|\int_Sc_{n-1}\langle u,\gamma^1\rangle\right|^2$$
which by the Cauchy-Schwartz inequality is dominated by
$$\int_Sc_{n-1}\langle u,u\rangle\int_Sc_{n-1}\langle\gamma^1,\gamma^1\rangle.$$
Since $\alpha^1$ is $\dbar-$closed, by Lemma \ref{lma:basic} this expression is in turn dominated by
$$C\int_Sc_{n-1}\langle u,u\rangle\int_Xe^wc_n\langle D'\gamma^1,D'\gamma^1\rangle.$$
Finally, as $\alpha_2$ is orthogonal to the space of $\dbar-$closed forms, $\dbar^*_h\alpha^2=0$ which combined with (\ref{eq:ad_norm}) gives $D'\gamma^2=0$. Altogether we hence get that
$$\left|\int_X(f,\alpha)dV_\omega\right|^2\leq C\int_Sc_{n-1}\langle u,u\rangle\int_Xe^wc_n\langle D'\gamma,D'\gamma\rangle.$$
\end{proof}
As discussed in the beginning of this section, finding an extension in the setting of Theorem \ref{thm:1.1} is equivalent to solving the $\dbar-$equation (\ref{eq:dbar}). With Lemma \ref{lma:lma2} at our disposal, we can now proceed to do this in essentially the same way as one proves the standard $L^2-$estimate for $\dbar$, H\"ormander \cite{H2}.

\begin{proof}[Proof of Theorem 1.1]
We set $f=u\wedge[S]$ and formulate (\ref{eq:dbar}) dually using our weighted scalar product by noting that it is equivalent to finding an $E-$valued $(n,0)-$form $v$ such that
$$\int_X(v,\dbar^*_h\alpha)dV_\omega=\int_X(f,\alpha)dV_\omega$$
for all smooth, $E-$valued $(n,1)-$forms $\alpha$. By Lemma \ref{lma:lma2} combined with (\ref{eq:ad_norm})
\be\label{eq:rhs}
\left|\int_X(f,\alpha)dV_\omega\right|^2\leq C\int_Sc_{n-1}\langle u,u\rangle\int_Xe^w\|\dbar_h^*\alpha\|^2dV_\omega.
\ee
Hence if we let
$$V:=\{\dbar^*_h\alpha\ ;\ \alpha\ \ \textrm{smooth, }E\!-\!\textrm{valued }(n,1)\!-\!\textrm{form}\}$$
and define an anti-linear functional $L$ on $V$ through
$$L(\dbar^*_h\alpha):=\int_X(f,\alpha)dV_\omega$$
then (\ref{eq:rhs}) says that $L$ is well-defined and of norm not exceeding
$$C\int_Sc_{n-1}\langle u,u\rangle$$
on $V$. By the Hahn-Banach theorem, $L$ can be extended to an anti-linear functional on
$$L_{(n,1)}^2(e^w):=\{\alpha\ \ E\!-\!\textrm{valued }(n,1)\!-\!\textrm{form such that }\int_Xe^w\|\alpha\|^2<\infty \}$$
with the same norm.

Now by the Riesz representation theorem there exists an $\eta\in L_{(n,1)}^2(e^w)$ such that
$$L(\beta)=\int_Xe^w(\eta,\beta)dV_\omega$$
for all $\beta\in L_{(n,1)}^2(e^w)$, and
$$\int_Xe^w\|\eta\|^2dV_\omega\leq C\int_Sc_{n-1}\langle u,u\rangle.$$
In particular if $\beta=\dbar^*_h\alpha$ we get that
$$\int_Xe^w(\eta,\dbar^*_h\alpha)dV_\omega=\int_X(f,\alpha)dV_\omega.$$
Furthermore, since we have chosen $0<r<1$ in $w=-r\log\big(|s|^2e^{-\psi}\big)$, $e^w$ is integrable and so $\dbar^*_h\alpha\in L_{(n,1)}^2(e^w)$ for all smooth, $E-$valued $(n,1)-$forms $\alpha$. Thus $v:=e^w\eta$ is the sought for solution of (\ref{eq:dbar}) and we have that
$$\int_Xe^w\|e^{-w}v\|^2dV_\omega\leq C\int_Sc_{n-1}\langle u,u\rangle.$$
However the left hand side here is nothing but
$$\int_Xe^w\|e^{-w}v\|^2dV_\omega=\int_Xe^{-w}c_n\langle v,v\rangle=\int_Xc_n\langle v,v\rangle\big(|s|^2e^{-\psi}\big)^r,$$
and so by our normalization $|s|^2e^{-\psi}\leq e^{-1/\delta}$
\be\label{eq:str_ineq}
\int_Xc_n\langle sv,sv\rangle e^{-\psi}\leq\int c_n\langle v,v\rangle\big(|s|^2e^{-\psi}\big)^r\leq C\int_Sc_{n-1}\langle u,u\rangle
\ee
for small enough $\delta$.

Hence by the discussion in the beginning of this section $U=2\pi isv$ solves our extension problem and we have have the estimate
$$\int_Xc_n\langle U,U\rangle e^{-\psi}\leq C\int_Sc_{n-1}\langle u,u\rangle.$$
\end{proof}

\begin{remark}
A more careful study of the proof of Theorem \ref{thm:1.1} reveals that in fact a slightly stronger inequality than the one stated holds. Namely as $U=2\pi isv$ it follows from (\ref{eq:str_ineq}) that
$$\int_X\frac{c_n\langle U,U\rangle}{|s|^{2-2r}}e^{-\psi}\leq C\int_Sc_{n-1}\langle u,u\rangle$$
for $0<r<1$. Thus although $v$ is not $L^2$ valued, $v^{1-\varepsilon}$ belongs to $L^2$ for all $\varepsilon>0$.

The same type of improvement can also be made in the non-adjoint setting below.  
\end{remark}

\section{The non-adjoint version and reduction to line bundles}
\noindent As the non-adjoint version of the extension theorem does not involve any forms we will first need to express the norms
$$\int_Xc_n\langle U,U\rangle e^{-\psi}$$
and
$$\int_Sc_{n-1}\langle u,u\rangle$$
in terms of $L^2$-norms with respect to the volume elements on $X$ and $S$ respectively. Let $\eta$ be an $E-$valued $(n,0)-$form. In section 2 we defined the norm of $\eta$ through
$$\|\eta\|^2dV_\omega:=c_n\langle \eta,\eta\rangle$$
and we also noted that if at a point $\eta=\eta_Edw_1\wedge\ldots\wedge dw_n$ where $\{dw_k\}$ denotes an orthonormal basis, then
$$\|\eta\|^2=(\eta_E,\eta_E)_h.$$
However if we have a local basis $\{dz_k\}$ which is not orthonormal we can use the K\"ahler metric to obtain a metric, $e^{-\phi_\omega}$, on the canonical bundle of $X$ through
$$\|\eta\|^2dV_\omega=(\eta_E,\eta_E)_he^{-\phi_\omega}dV_\omega.$$
One way to see this is to observe that locally 
$$c_n\langle\eta,\eta\rangle=(\eta^i_E,\eta^i_E)\frac{c_ndz^i\wedge d\bar{z}^i}{dV_\omega}dV_\omega=:(\eta^i_E,\eta^i_E)e^{-\phi^i_\omega}dV_\omega$$
where $dz^i:=dz^i_1\wedge\ldots\wedge dz_n^i$ and $\eta$ is locally represented as $\eta^i_Edz^i$. Hence the metric locally is defined as
$$\phi^i_\omega:=-\log\left(\frac{c_ndz^i\wedge d\bar{z}^i}{dV_\omega}\right).$$
One can then readily check that if $\{dz_k^j\}$ is another basis with $g_{ij}dz^i=dz^j$ then $\phi^i_\omega-\phi^j_\omega=\log|g_{ij}|^2$ and hence $e^{-\phi_\omega}$ is well-defined as a metric on $K_X$.
Thus
$$\int_Xc_n\langle U,U\rangle e^{-\psi}=\int_X(U,U)_he^{-\phi_\omega-\psi}dV_\omega.$$

To express the integral over $S$ in a similar way we will use $dS$, the volume (or surface) measure on the hypersurface $S$. This is a measure on $X$ and so can be regarded as a current of bidegree $(n,n)$ which in turn induces a current of bidegree $(0,0)$ that we will denote by $\ast dS$, through
$$(\ast dS)dV_\omega=dS.$$
One can then show (see e.g. Berndtsson \cite{B1}) that the current of integration on $S$ can be represented as
$$[S]=\frac{ids\wedge d\bar{s}}{\|ds\|^2}\ast dS$$
where the right hand is defined by taking any local representative of the section $s$, and the norm of $ds$ is defined as usual through
$$\|ds\|^2dV_\omega:=ids\wedge d\bar{s}\wedge\omega_{n-1}.$$
If we now let $U_0$ denote the restriction of $U$ to $S$ so that $U_0=u\wedge ds$ we then have that
$$\int_Sc_{n-1}\langle u,u\rangle=\int_Xc_{n-1}\langle u,u\rangle\wedge\frac{ids\wedge d\bar{s}}{\|ds\|^2}\ast dS=\int_Xc_n\langle U_0,U_0\rangle\frac{\ast dS}{\|ds\|^2}.$$
However from the previous discussion we know that
$$c_n\langle U_0,U_0\rangle=(U_0,U_0)_he^{-\phi_\omega}dV_\omega$$
so that in fact
$$\int_Sc_{n-1}\langle u,u\rangle=\int_Sc_n\langle U_0,U_0\rangle\frac{\ast dS}{\|ds\|^2}=\int_S(U_0,U_0)_he^{-\phi_\omega-\psi}\frac{dS}{\|ds\|^2e^{-\psi}}.$$

We can now proceed to prove the non-adjoint version of the extension theorem.
 
\begin{proof}[Proof of Theorem 1.2]
We can recast the theorem to the setting of Theorem \ref{thm:1.1} by letting $E:=F\otimes(S)^{-1}\otimes K_X^{-1}$ and defining $\tilde{h}:=h\otimes e^{\phi_\omega+\psi}$ as a metric on $E$. The curvature assumptions (\ref{eq:curv1}) and (\ref{eq:curv2}) for $\tilde{h}$ are then equivalent to the assumptions
$$i\Theta^h-(1+\delta)i\partial\dbar\psi\otimes I-i\partial\dbar\phi_{\omega}\otimes I\geq_{N}0$$
and
$$i\Theta^h-i\partial\dbar(\psi+\phi_{\omega})\otimes I\geq_{N}0.$$
Thus Theorem \ref{thm:1.1} and our previous discussion yield that any holomorphic section $U_0$ of $F$ over $S$ extends holomorphically to a section $U$ of $F$ over $X$ with the estimate
$$\int_X(U,U)_hdV_\omega\leq C\int_S(U_0,U_0)_h\frac{dS}{\|ds\|^2e^{-\psi}}.$$
This proves the theorem when $\phi_{K_X}=\phi_\omega$.

For an arbitrary metric $\phi_{K_X}$ on the canonical bundle of $X$ we know that there exists a smooth function $\chi$ such that
$$\phi_{K_X}+\chi=\phi_\omega.$$
Hence we get that
$$\phi_\omega+\psi=(\phi_{K_X}+\chi)+\psi=\phi_{K_X}+(\chi+\psi)$$
so changing $\phi_\omega$ to $\phi_{K_X}$ is equivalent to changing $\psi$ to $\psi+\chi$, and in this setting (\ref{eq:curv1}) and (\ref{eq:curv2}) are equivalent to (\ref{eq:vb_curv1}) and (\ref{eq:vb_curv2}).
\end{proof}

We now turn to the alternative route to obtaining extensions by reformulating the problem in a setting with line bundles. This involves the study of the so called Serre line bundle over the projectivization of a vector bundle, which can be constructed in an abstract, global way as e.g. in \cite{L}. However here we will content ourselves with a, from our perspective, more concrete local description.

Hence let $F$ denote an arbitrary complex vector bundle over $X$ with rank$_{\mathbb{C}}F=r$, and let $F^*$ denote the dual bundle. We can now define a fiber bundle $\pi:\mathbb{P}(F)\to X$ by defining each fiber through $\mathbb{P}(F)_x:=\mathbb{P}(F^*_x)$, the projectivization of an $r-$dimensional vector space. Locally, for an open set $U\subset X$ we have that $F^*\big|_U\simeq U\times\C^{r}$ and then $\mathbb{P}(F)\big|_U\simeq U\times\mathbb{P}^{r-1}$. Furthermore the pullback bundle $\pi^*F^*\to\mathbb{P}(F)$ will then locally be given by
$$\pi^*F^*\big|_U\simeq U\times\mathbb{P}^{r-1}\times\C^{r}$$
and so we can define the tautological line subbundle $\mathcal{O}_{\mathbb{P}(F)}(-1)$ of $\pi^*F^*$ as
$$\mathcal{O}_{\mathbb{P}(F)}(-1)\big|_U:=\{(x,[w],z)\ ;\ z\in[w]\}.$$
The line bundle $\mathcal{O}_{\mathbb{P}(F)}(1)$ (which we, following Lazarsfeld \cite{L}, have called the Serre line bundle) is then defined as the dual of $\mathcal{O}_{\mathbb{P}(F)}(-1)$. The notation is justified by the fact that fiberwise this is nothing but the usual line bundle $\mathcal{O}(1)$ over $\mathbb{P}^r$. Thus we have that the global holomorphic sections of $\mathcal{O}_{\mathbb{P}(F)}(1)$ over any fiber are in one-to-one correspondence with the linear forms on $F^*_x$, i.e. with the elements of $F_x$; this is the reason for projectivizing $F^*$ instead of $F$.

Now in the setting of Theorem \ref{thm:1.2} instead of proving the existence of extensions directly as we have done one may reduce the extension problem by studying the line bundles $\mathcal{O}_{\mathbb{P}(F)}(1)$ and $\pi^*(S)$ over $\mathbb{P}(F)$ instead of $F$ and $(S)$ over $X$. Here the usual extension theorem of Ohsawa-Takegoshi applies which states that for such an extension to exist $\mathcal{O}_{\mathbb{P}(F)}(1)$ and $\pi^*(S)$ must have metrics that satisfy the curvature assumptions (\ref{eq:lb_curv1}) and (\ref{eq:lb_curv2}). In our setting $\mathcal{O}_{\mathbb{P}(F)}(1)$ and $\pi^*(S)$ inherit the metrics $\pi^*h$ and $\pi^*\psi$ from $F$ and $(S)$, and we are interested in finding out what conditions on $h$ and $\psi$ that arise from (\ref{eq:lb_curv1}) and (\ref{eq:lb_curv2}). For this we will utilize two well-known facts concerning $\mathcal{O}_{\mathbb{P}(F)}(1)\to\mathbb{P}(F)$ and its relation to $F\to X$ that we will not prove.

The first of these is that if the metric $h$ on $F$ is non-negatively curved in the sense of Griffiths, then the inherited metric $\pi^*h$ on $\mathcal{O}_{\mathbb{P}(F)}(1)$ is also non-negative. Following Hartshorne \cite{H1} we will call a vector bundle $F$ ample if it has the property that $\mathcal{O}_{\mathbb{P}(F)}(1)$ can be equipped with a non-negatively curved metric. It is a well-known conjecture of Griffiths that the converse to the previous statement holds in the sense that if $F$ is ample then one can also find a metric on $F$ that is non-negative in the sense of Griffiths, \cite{G}. However, if the metric on $\mathcal{O}_{\mathbb{P}(F)}(1)$ stems from a metric on $F$, so that one does not need to construct a metric on $F$, then one can show that the non-negativity of $\mathcal{O}_{\mathbb{P}(F)}(1)$ is in fact equivalent with the non-negativity of $F$ in the sense of Griffiths.

The second property that we will need is that for any line bundle $L$ on $X$, $\P(F)$ is isomorphic to $\P(F\otimes L)$ via an isomorphism under which $\mathcal{O}_{\mathbb{P}(F\otimes L)}(1)$ is isomorphic to $\mathcal{O}_{\mathbb{P}(F)}(1)\otimes\pi^*L$, see e.g. Lazarsfeld \cite{L}.

We now proceed to show that the curvature assumptions (\ref{eq:lb_curv1}) and (\ref{eq:lb_curv2}) in fact imply those in Theorem \ref{thm:1.2}. We formulate this as a proposition.

\begin{prop}
If there exists a metric on $K_{\P(F)}$ which together with the inherited metrics $\pi^*h$ and $\pi^*\psi$ on $\mathcal{O}_{\mathbb{P}(F)}(1)$ and $\pi^*(S)$ satisfies the curvature assumptions (\ref{eq:lb_curv1}) and (\ref{eq:lb_curv2}), then there exists a metric on $K_X$ that along with the metrics $h$ and $\psi$ satisfies the curvature assumptions (\ref{eq:vb_curv1}) and (\ref{eq:vb_curv2}).
\end{prop}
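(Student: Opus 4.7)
The strategy is to translate the line-bundle curvature hypotheses on $\P(F)$ into a Griffiths positivity statement for a suitably twisted version of $F$ on $X$, and then invoke Demailly--Skoda to promote Griffiths positivity to Nakano positivity.

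The first step would use the relative canonical bundle formula $K_{\P(F)}=\pi^*K_X\otimes\mathcal{O}_{\P(F)}(-r)\otimes\pi^*\det F$, with $r=\rank F$, to rewrite the line bundle appearing in hypothesis~(\ref{eq:lb_curv2}) as
\begin{equation*}
\mathcal{O}_{\P(F)}(1)\otimes\pi^*(S)^{-1}\otimes K_{\P(F)}^{-1}=\mathcal{O}_{\P(F)}(r+1)\otimes\pi^*\bigl[(S)\otimes K_X\otimes\det F\bigr]^{-1},
\end{equation*}
and analogously for~(\ref{eq:lb_curv1}). An arbitrary smooth metric on $K_{\P(F)}$ takes the form $\pi^*\phi_{K_X}-r\tau_h+\pi^*\phi_{\det F,h}+\eta$, where $\tau_h$ and $\phi_{\det F,h}$ are the weights of the $h$-induced metrics on $\mathcal{O}(1)$ and $\det F$, $\phi_{K_X}$ is an auxiliary metric on $K_X$, and $\eta$ is a smooth function on $\P(F)$ measuring the deviation from the natural induced metric. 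Substituting this decomposition, hypothesis~(\ref{eq:lb_curv2}) becomes
\begin{equation*}
(r+1)\Omega_h-\pi^*\bigl[i\partial\dbar(\psi+\phi_{K_X})+i\Theta^{\det F}\bigr]\;\geq\;i\partial\dbar\eta
\end{equation*}
as $(1,1)$-forms on $\P(F)$, where $\Omega_h$ denotes the curvature of the $h$-induced metric on $\mathcal{O}(1)$.

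Next, I would pick $\phi_{K_X}$ so as to eliminate the $i\partial\dbar\eta$ term and then read off the corresponding pointwise Griffiths inequality on $X$. Absorbing the fiber-average of $\eta$ by replacing $\phi_{K_X}\mapsto\phi_{K_X}+g$, where $g(x)$ is the Fubini--Study mean of $\eta$ over $\pi^{-1}(x)$, reduces to the case that $\eta$ has zero fiber-average. Evaluating the resulting inequality on horizontal lifts $\tilde v$ of $v\in T_xX$ at a fiber point $[w]\in\P(F^*_x)$ and using the standard identity $\Omega_h(\tilde v,\bar{\tilde v})=\la\Theta^h(v,\bar v)w,w\ra_h/\|w\|_h^2$, the inequality I want is
\begin{equation*}
(r+1)\frac{\la\Theta^h(v,\bar v)w,w\ra_h}{\|w\|_h^2}\;\geq\;i\partial\dbar(\psi+\phi_{K_X})(v,\bar v)+i\Theta^{\det F}(v,\bar v)
\end{equation*}
for every $x$, $v$, $w$. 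This is precisely Griffiths non-negativity for $F$ equipped with the scalar weight shift $\phi_L:=-\tfrac{1}{r+1}(\psi+\phi_{K_X}+\phi_{\det F,h})$.

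Finally, I would apply Demailly--Skoda to this Griffiths non-negative twist of $F$, call it $E$: the conclusion is Nakano non-negativity of $E\otimes\det E$. The bookkeeping runs as follows: $\det E=\det F$ carries the induced weight shift $r\phi_L$, so the metric on $E\otimes\det E$ has total scalar shift $(r+1)\phi_L=-(\psi+\phi_{K_X}+\phi_{\det F,h})$ relative to $h\otimes\det h$. The factor $\phi_{\det F,h}$ cancels against the natural metric on $\det F$, identifying $E\otimes\det E$ with $F\otimes(S)^{-1}\otimes K_X^{-1}$ carrying the product metric $h\otimes\psi^{-1}\otimes\phi_{K_X}^{-1}$, whose Nakano non-negativity is exactly~(\ref{eq:vb_curv2}). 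Repeating the argument with $(1+\delta)\psi$ in place of $\psi$ yields~(\ref{eq:vb_curv1}). The main obstacle is the second step: after absorbing the fiber-average, removing the remaining zero-average function $\eta$ from the Griffiths inequality is not automatic, since its vertical restriction only satisfies $i\partial\dbar\eta|_{\mathrm{fiber}}\leq(r+1)\omega_{FS}$ and need not force $\eta$ to be a pullback from $X$. I expect the resolution to combine both hypotheses~(\ref{eq:lb_curv1}) and~(\ref{eq:lb_curv2}) with the compactness of the projective fibers, perhaps via a Kiselman-type minimum-principle argument.
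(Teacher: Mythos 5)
Your strategy is the paper's strategy: the relative canonical bundle formula, reduction to Griffiths semi\nobreakdash-positivity of the twist $F\otimes\bigl((S)\otimes\det F\otimes K_X\bigr)^{-1/(r+1)}$, Demailly--Skoda, and the same determinant bookkeeping. The paper phrases the reduction as taking the $(r+1)$:th root of the line bundle $\mathcal{O}_{\P(F)}(r+1)\otimes\pi^*(S)^{-1}\otimes\pi^*(\det F\otimes K_X)^{-1}$ and invoking the isomorphism $\mathcal{O}_{\P(F\otimes L)}(1)\cong\mathcal{O}_{\P(F)}(1)\otimes\pi^*L$ rather than writing out weights, but that is the same computation as yours. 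The one genuine divergence is the step you flag as the main obstacle: the paper does not perform your $\eta$-analysis at all. It passes directly from ``$\mathcal{O}_{\P(F\otimes L)}(1)$ carries a semi-positive metric'' to ``$F\otimes L\geq_G0$'' by citing the equivalence of semi-positivity of the Serre bundle with Griffiths semi-positivity of the vector bundle \emph{when the metric is induced}. As you correctly observe, applying that equivalence here requires the metric on the $\pi^*L$-factor --- equivalently on $\pi^*(\det F\otimes K_X)=K_{\P(F)}\otimes\mathcal{O}_{\P(F)}(r)$ --- to be a pullback from $X$, i.e.\ it requires your $\eta$ to be constant on fibres, whereas the Proposition only assumes \emph{some} metric on $K_{\P(F)}$. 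So do not look for a Kiselman-type minimum principle in the paper: the term $i\partial\dbar\eta$ is simply never confronted, and the proof in effect reads the hypothesis as if the metric on $K_{\P(F)}$ were itself induced from $\pi^*h$ and a metric on $\det F\otimes K_X$ (the non-induced situation is deferred to the closing Remark, which handles it by an entirely different theorem of Berndtsson on ample bundles). Your observation that the vertical restriction only yields $i\partial\dbar\eta|_{\mathrm{fibre}}\leq(r+1)\omega_{FS}$, which does not force $\eta$ to be a pullback, is accurate; to make the argument airtight one should either add the induced-metric hypothesis on $K_{\P(F)}$ or supply the fibrewise argument you sketch, neither of which is done in the paper.
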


\begin{proof}
We will content ourselves with proving that (\ref{eq:lb_curv2}) implies (\ref{eq:vb_curv2}). That (\ref{eq:lb_curv1}) implies (\ref{eq:vb_curv1}) can then be proved in exactly the same way.

Now we start by rewriting (\ref{eq:lb_curv2}) using the well-known fact that
$$K_{\P(F)}=\mathcal{O}_{\mathbb{P}(F)}(-r)\otimes\pi^*\big(\det F\otimes K_X\big),$$
see e.g. Griffiths \cite{G}. Here $\mathcal{O}_{\mathbb{P}(F)}(-r)$ denotes the tensor product of $\mathcal{O}_{\mathbb{P}(F)}(-1)$ $r$ times and in what follows we will use this notation repeatedly. Hence we get that (\ref{eq:lb_curv2}) is equivalent to
$$\mathcal{O}_{\mathbb{P}(F)}(r+1)\otimes\pi^*(S)^{-1}\otimes\pi^*(\det F\otimes K_X)^{-1}\geq0$$
by which we mean that this line bundle has a metric which is non-negatively curved. Taking the $(r+1)$:th root, i.e. multiplying the curvature tensor by $1/(r+1)$, we get that
$$\mathcal{O}_{\mathbb{P}(F)}(1)\otimes\pi^*(S)^{-\frac{1}{r+1}}\otimes\pi^*(\det F\otimes K_X)^{-\frac{1}{r+1}}\geq0,$$
which by the above mentioned isomorphism is equivalent to
$$\mathcal{O}_{\mathbb{P}(F\otimes((S)\otimes\det F\otimes K_X)^{-1/(r+1)})}\geq0.$$
Thus by our previous discussion, as we know that the metric on $\mathcal{O}_{\mathbb{P}(F)}(1)$ comes from a metric on $F$, this implies that
$$F\otimes\big((S)\otimes\det F\otimes K_X\big)^{-\frac{1}{r+1}}\geq_G0.$$

Now a fundamental theorem of Demailly and Skoda \cite{DS} states that if a vector bundle $E$ is non-negatively curved in the sense of Griffiths then $E\otimes\det E$ is non-negatively curved in the sense of Nakano. Utilizing this we get that
$$F\otimes\big((S)\otimes\det F\otimes K_X\big)^{-\frac{1}{r+1}}\otimes\det F\otimes\big((S)\otimes\det F\otimes K_X\big)^{-\frac{r}{r+1}}\geq_N0.$$
But this is nothing but
$$F\otimes(S)^{-1}\otimes K_X^{-1}\geq_N0,$$
which yields (\ref{eq:vb_curv2}).
\end{proof}

\begin{remark}
One might (rightfully) object that the line bundle assumptions (\ref{eq:lb_curv1}) and (\ref{eq:lb_curv2}) yield the existence of an extension even if the metrics involved do not stem from metrics on the vector bundle, which is a key assumption for our argument. However by a theorem of Berndtsson \cite{B2} if $F$ is ample then $F\otimes\det F$ is non-negative in the sense of Nakano so (\ref{eq:lb_curv1}) and (\ref{eq:lb_curv2}) imply (\ref{eq:vb_curv1}) and (\ref{eq:vb_curv2}) even if the metric on $\mathcal{O}_{\mathbb{P}(F)}(1)$ is not inherited from a metric on $F$.

\end{remark}


\begin{bibdiv}
\begin{biblist}

\bib{B1}{article}{
   author={Berndtsson, Bo},
   title={An introduction to things $\overline\partial$},
   conference={
      title={Analytic and algebraic geometry},
   },
   book={
      series={IAS/Park City Math. Ser.},
      volume={17},
      publisher={Amer. Math. Soc.},
      place={Providence, RI},
   },
   date={2010},
   pages={7--76},
}

\bib{B2}{article}{
   author={Berndtsson, Bo},
   title={Curvature of vector bundles associated to holomorphic fibrations},
   journal={Ann. of Math. (2)},
   volume={169},
   date={2009},
   number={2},
   pages={531--560},
}

\bib{B3}{article}{
   author={Berndtsson, Bo},
   title={The extension theorem of Ohsawa-Takegoshi and the theorem of
   Donnelly-Fefferman},
   journal={Ann. Inst. Fourier (Grenoble)},
   volume={46},
   date={1996},
   number={4},
   pages={1083--1094},
}

\bib{DS}{article}{
   author={Demailly, J.-P.},
   author={Skoda, H.},
   title={Relations entre les notions de positivit\'es de P. A. Griffiths et
   de S. Nakano pour les fibr\'es vectoriels},
   language={French},
   conference={
      title={S\'eminaire Pierre Lelong-Henri Skoda (Analyse). Ann\'ees
      1978/79 (French)},
   },
   book={
      series={Lecture Notes in Math.},
      volume={822},
      publisher={Springer},
      place={Berlin},
   },
   date={1980},
   pages={304--309},
}

\bib{G}{article}{
   author={Griffiths, Phillip A.},
   title={Hermitian differential geometry, Chern classes, and positive
   vector bundles},
   conference={
      title={Global Analysis (Papers in Honor of K. Kodaira)},
   },
   book={
      publisher={Univ. Tokyo Press},
      place={Tokyo},
   },
   date={1969},
   pages={185--251},
}

\bib{GH}{book}{
   author={Griffiths, Phillip},
   author={Harris, Joseph},
   title={Principles of algebraic geometry},
   note={Pure and Applied Mathematics},
   publisher={Wiley-Interscience [John Wiley \& Sons], New York},
   date={1978},
   pages={xii+813},
}

\bib{GZ}{article}{
   author={Guan, Qi'an},
   author={Zhou, Xiangyu},
   title={A solution of an L2 extension problem with optimal estimate and applications},
   date={2013},
   status={Preprint},
   eprint={arXiv:1310.7169 [math.CV]},
   url={http://arxiv.org/abs/1310.7169}

}

\bib{H1}{article}{
   author={Hartshorne, Robin},
   title={Ample vector bundles},
   journal={Inst. Hautes \'Etudes Sci. Publ. Math.},
   number={29},
   date={1966},
   pages={63--94},
}

\bib{H2}{article}{
   author={H{\"o}rmander, Lars},
   title={$L^{2}$ estimates and existence theorems for the $\bar \partial
   $\ operator},
   journal={Acta Math.},
   volume={113},
   date={1965},
   pages={89--152},
}

\bib{L}{book}{
   author={Lazarsfeld, Robert},
   title={Positivity in algebraic geometry. I},
   series={Ergebnisse der Mathematik und ihrer Grenzgebiete. 3. Folge. A
   Series of Modern Surveys in Mathematics [Results in Mathematics and
   Related Areas. 3rd Series. A Series of Modern Surveys in Mathematics]},
   volume={48},
   note={Classical setting: line bundles and linear series},
   publisher={Springer-Verlag},
   place={Berlin},
   date={2004},
   pages={xviii+387},
}

\bib{O}{article}{
   author={Ohsawa, Takeo},
   title={On the extension of $L^2$ holomorphic functions. V. Effects of
   generalization},
   journal={Nagoya Math. J.},
   volume={161},
   date={2001},
   pages={1--21},
}

\bib{OT}{article}{
   author={Ohsawa, Takeo},
   author={Takegoshi, Kensh{\=o}},
   title={On the extension of $L^2$ holomorphic functions},
   journal={Math. Z.},
   volume={195},
   date={1987},
   number={2},
   pages={197--204},
}

\bib{R}{article}{
   author={Raufi, Hossein},
   title={The Nakano vanishing theorem and a vanishing theorem of Demailly-Nadel type for vector bundles},
   date={2012},
   status={Preprint},
   eprint={arXiv:1212.4417 [math.CV]},
   url={http://arxiv.org/abs/1212.4417}

}

\bib{S}{article}{
   author={Siu, Yum Tong},
   title={Complex-analyticity of harmonic maps, vanishing and Lefschetz
   theorems},
   journal={J. Differential Geom.},
   volume={17},
   date={1982},
   number={1},
   pages={55--138},
}

\bib{W}{book}{
   author={Warner, Frank W.},
   title={Foundations of differentiable manifolds and Lie groups},
   series={Graduate Texts in Mathematics},
   volume={94},
   note={Corrected reprint of the 1971 edition},
   publisher={Springer-Verlag, New York-Berlin},
   date={1983},
   pages={ix+272},
}

\end{biblist}
\end{bibdiv}

\end{document}